\documentclass[twoside, 12pt]{article}
\usepackage[utf8]{inputenc}
\usepackage[T1]{fontenc}
\usepackage{amssymb,amsmath,amsthm,mathtools,mathrsfs}
\usepackage{enumitem}
\usepackage[colorlinks,bookmarks,linkcolor=black,citecolor=black]{hyperref}
\usepackage{graphicx}
\usepackage{color}
\usepackage[top=2cm, bottom=2cm, left=2cm, right=2cm]{geometry}
\usepackage{float}

\newcommand{\bd}{\begin{description}}
\newcommand{\ed}{\end{description}}
\newcommand{\bi}{\begin{itemize}}
\newcommand{\ei}{\end{itemize}}
\newcommand{\be}{\begin{enumerate}}
\newcommand{\ee}{\end{enumerate}}
\newcommand{\beq}{\begin{equation}}
\newcommand{\eeq}{\end{equation}}
\newcommand{\beqs}{\begin{eqnarray*}}
\newcommand{\eeqs}{\end{eqnarray*}}

\definecolor{DarkGreen}{rgb}{0.2, 0.6, 0.3}

\newtheorem{theorem}{Theorem}
\newtheorem{conjecture}{Conjecture}

\newtheorem{lemma}{Lemma}
\newtheorem{definition}{Definition}

\newtheorem{question}{Question}

\def\endofClaim{\hfill\scalebox{.6}{$\blacksquare$}}
\newcommand{\oldqed}{}

\begin{document}
\title{Infinitely many size-Ramsey numbers of $k$-uniform relaxed $\ell$-trees are not polynomial}

\author{
Meng Ji\footnote{School of Mathematical Sciences, and Institute of Mathematics and Interdisciplinary Sciences, Tianjin Normal University, Tianjin, China. Supported by the Tianjin Municipal Education Commission Scientific Research Program Project (Grant No. 2025KJ133).  {\tt
mji@tjnu.edu.cn}}
}
\date{September 3, 2026}
\maketitle

\begin{abstract}
The size-Ramsey number $\widehat{R}_k(\mathcal G)$ of a $k$-uniform hypergraph $\mathcal G$ is the minimum number of edges in a $k$-uniform hypergraph $\mathcal H$ such that every $2$-edge-coloring of $\mathcal H$ contains a monochromatic copy of $\mathcal G$. The following question was pointed out by Fox and recorded by Dudek, La Fleur, Mubayi and R\"{o}dl~\cite{Dudek-Fleur-Mubayi-Rodl}: for fixed $2\le \ell<k$, is the size-Ramsey number of every $k$-uniform relaxed $\ell$-tree bounded by a polynomial in $n$? We answer this question in the range
\[
\ell\geq3 \quad\text{and}\quad \ell+1\leq k\leq2\ell-2.
\]
For every sufficiently large $n$, we construct a $k$-uniform relaxed $\ell$-tree $\bar{\mathcal{T}}_{n,\ell}^{(k)}$ on exactly $n$ vertices such that
$$
\widehat{R}_k(\bar{\mathcal{T}}_{n,\ell}^{(k)})\ge 2^{c_{k,\ell}n^{1/\ell}}
$$
for a constant $c_{k,\ell}>0$ depending only on $k$ and $\ell$.
\\[2mm]
{\bf Keywords:} size-Ramsey number; hypergraph; Ramsey number \\[2mm]
{\bf AMS subject classification 2020:} 05C65; 05D10
\end{abstract}
\section{introduction}
For graphs $G$ and $H$, write $H\longrightarrow G$ if every red-blue edge-coloring of $H$ contains a monochromatic copy of $G$. The (two-color) Ramsey number of a graph $G$ is
\[
R(G):=\min\{N:K_N\longrightarrow G\}.
\]
Instead of minimizing the number of vertices, one can minimize the number of edges. In 1978, Erd\H{o}s, Faudree, Rousseau, and Schelp \cite{Erdos-Faudree-Rousseau-Schelp} pioneered the study of size-Ramsey numbers. The size-Ramsey number $\widehat R(G)$ of a graph $G$ is the minimum number of edges in a graph $H$ such that $H\longrightarrow G$.
Analogously to the graph version,
a $k$-uniform hypergraph $\mathcal{G}$ ($k$-graph for short) on a vertex set $V(\mathcal{G})$ is a family of $k$-element subsets (called edges) of $V(\mathcal{G})$ with edge set $E(\mathcal{G})$. Given $k$-graphs $\mathcal{G}$ and $\mathcal{H}$, write $\mathcal{H}\longrightarrow_k\mathcal{G}$ if every red-blue edge-coloring of $\mathcal{H}$ contains a monochromatic copy of $\mathcal{G}$, and define
\[
R_k(\mathcal G):=\min\{N:K_N^{(k)}\longrightarrow_k\mathcal G\},
\qquad
\widehat{R}_k(\mathcal G):=\min\{|E(\mathcal H)|:\mathcal H\longrightarrow_k\mathcal G\}.
\]

Dudek, La Fleur, Mubayi, and R\"odl~\cite{Dudek-Fleur-Mubayi-Rodl} initiated the study of size-Ramsey numbers for $k$-uniform hypergraphs. (For more results, see~\cite{Bal-DeBiasio-Lo,Clemens-Jenssen-Kohayakawa-Morrison-Mota-Reding-Roberts,Han-Kohayakawa-Letzter-Mota-Parczyk,Letzter-Pokrovskiy-Yepremyan,Lu-Wang}.) Among their results is the following polynomial upper bound for the strict notion of an $\ell$-tree. 
\begin{theorem}[\cite{Dudek-Fleur-Mubayi-Rodl}]
Let $1 \leq \ell < k$ be fixed integers. Then
$$
\widehat{R}_k\left(\mathcal{T}_{n,\ell}^{(k)}\right) = O(n^{\ell+1}).
$$
\end{theorem}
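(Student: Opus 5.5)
We would prove the theorem of Dudek, La Fleur, Mubayi and R\"odl through a random host hypergraph together with a greedy embedding driven by expansion. The strict $\ell$-tree $\mathcal{T}_{n,\ell}^{(k)}$ is built edge by edge. Each new edge meets the union of the previous edges in exactly $\ell$ vertices, and these $\ell$ vertices lie inside a single earlier edge. So the tree is governed by a ``shadow'' structure on $\ell$-sets. The natural host $\mathcal H$ is a random $k$-graph on $N=Cn$ vertices, say a union of random $k$-sets chosen so that every $\ell$-set lies in roughly $p\binom{N-\ell}{k-\ell}$ edges. Since $\binom{N}{\ell}\cdot n = O(n^{\ell+1})$, this yields $O(n^{\ell+1})$ edges if each $\ell$-set lies in about $Cn$ edges. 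Concretely, we take $p = C/\binom{N}{k-\ell-1}$-ish, tuned so that $|E(\mathcal H)|=\Theta(n^{\ell+1})$.

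The key steps, in order, are as follows.

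First, reduce from $k$-graphs to an auxiliary colored structure on $\ell$-sets. Given a red-blue coloring of $\mathcal H$, call an $\ell$-set $S$ \emph{red-rich} if at least half of its $\mathcal H$-edges are red, and \emph{blue-rich} otherwise. We then pass to the $\ell$-sets of one majority color. Each such $\ell$-set has about $Cn/2$ extensions in that color.

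Second, prove a Friedman--Pippenger type extension lemma, adapted to hypergraphs. Suppose the host has the property that for every small family $\mathcal X$ of $\ell$-sets, with $|\mathcal X|\le 2n$, the monochromatic extensions of $\mathcal X$ cover many new vertices. Then any tree of $n$ vertices and maximum ``branching'' bounded by the degree can be embedded greedily, always keeping available extensions away from already used vertices. Because an $\ell$-tree with $n$ vertices has unbounded branching, we follow instead the strategy of Dudek et al. in the graph case. We embed edge by edge, using that each attachment $\ell$-set must have at least $n$ extensions avoiding the at most $n$ used vertices. This holds with room to spare once each rich $\ell$-set has $\ge 2n$ monochromatic extensions whose remaining $k-\ell$ vertices are mutually ``spread''.

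Third, handle the main obstacle. A single color need not be rich at every $\ell$-set that the embedding reaches. The new edge creates fresh $\ell$-sets that are rich in the wrong color. We would handle this with a cleaning argument. We iteratively delete $\ell$-sets with fewer than $2n$ red extensions inside the current family (a core/peeling procedure). Then we show via random-graph density and the Chernoff bound that either the red core or the blue core is nonempty and closed: every edge of the core color containing a core $\ell$-set has all its $\ell$-subsets in the core. Showing that the deleted sets cannot destroy both cores is the delicate counting step. We expect it to use that at least half of all $\binom{N}{\ell}\cdot Cn$ incidences are one color, and that peeling removes at most $2n$ incidences per deleted set.

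Finally, in the surviving core, the greedy embedding succeeds. Each attachment set has $\ge 2n$ core-colored edges. At most $n\binom{k-1}{k-\ell-1}$ of them hit used vertices, so a suitable constant gives an available edge. This yields a monochromatic copy and hence the bound $O(n^{\ell+1})$.
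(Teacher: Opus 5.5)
The paper itself does not prove this statement; it is quoted from Dudek, La Fleur, Mubayi and R\"odl. Your proposal therefore has to stand on its own. Its architecture is the right one: a host on $\Theta(n)$ vertices with $\ell$-degrees $\Theta(n)$; the majority color; peeling to a core in which every $\ell$-set covered by a core edge has core degree at least $d$; and a greedy embedding that uses strictness (the attachment set lies in an earlier embedded edge, hence is covered by a core edge).

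\textbf{The gap.} The estimate that makes the greedy step work is never stated correctly. For every $\ell$-set $S$ and every set $U$ of at most $n$ used vertices, the number $X_{S,U}$ of host edges containing $S$ and meeting $U\setminus S$ must be less than $d$. This must hold uniformly in $U$, because $U$ depends on the coloring. In your model with $k\ge\ell+2$, this count is of order $Cn$ for every $n$-set $U$: about $n\binom{N}{k-\ell-1}$ candidate $k$-sets contain $S$ and meet $U$, each present with probability $C/\binom{N}{k-\ell-1}$. Your bound $n\binom{k-1}{k-\ell-1}$ would need a pointwise bound on $(\ell+1)$-codegrees, and that fails: they are roughly Poisson with mean $C$, with unbounded maximum. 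Moreover, the threshold $2n$ is too small whenever $C$ is large enough for the core to survive. As written, no available edge is guaranteed.

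Conversely, the step you call delicate is elementary and deterministic. Repeatedly deleting all edges through an $\ell$-set of positive degree below $d$ removes fewer than $d\binom{N}{\ell}$ edges in total. So the majority color leaves a nonempty closed core whenever $|E(\mathcal H)|\ge 2d\binom{N}{\ell}$. Neither a Chernoff bound nor the red-rich/blue-rich preprocessing is needed there.

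\textbf{The repair.} Your parametrization does work once the threshold is chosen correctly. With $N=Cn$ and $p=C/\binom{N}{k-\ell-1}$, $X_{S,U}$ is a sum of independent indicators with mean at most $Cn$, so $\Pr[X_{S,U}\ge e^2Cn]\le e^{-e^2Cn}$. A union bound over at most $N^{\ell}(eC)^n$ pairs $(S,U)$ with $|U|=n$ succeeds because $1+\log C<e^2C$. Take $d=e^2Cn+1$. With high probability $|E(\mathcal H)|=(1+o(1))\binom{N}{\ell}\,C^2n/\bigl((k-\ell)\binom{k}{\ell}\bigr)$. Hence the core is nonempty once $C>2e^2(k-\ell)\binom{k}{\ell}$, and $|E(\mathcal H)|=O(n^{\ell+1})$.

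For $k=\ell+1$ your $p$ equals $C$ and is not a probability; take $\mathcal H=K^{(\ell+1)}_N$ instead. There is also a deterministic option for any $k$: a maximal family of $k$-subsets of $[N]$ in which every $(\ell+1)$-set lies in at most one member has $\Omega(N^{\ell+1})$ members by a simple counting argument. In that host $X_{S,U}\le|U|$, so $d=n+1$ suffices.

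\textbf{Small attachments.} The definition only requires $|e_j\cap\bigcup_{i<j}e_i|\le\ell$, not equality. For an attachment set $T$ with $|T|<\ell$ (including $T=\emptyset$), the core gives no direct guarantee, since any $\ell$-superset of $T$ inside an earlier edge contains used vertices. A short swapping argument fixes this: extend $T\cup R$ to a core edge whose other vertices avoid $U$, replace used vertices of $R$ by these new ones, and repeat until $R\cap U=\emptyset$. This argument has to be included.
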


We recall the two notions of tree used here.

\begin{definition}
Let $1\leq\ell<k$. A $k$-uniform hypergraph $\mathcal T$ with an edge ordering $e_1,\ldots,e_m$ is an $\ell$-tree $\mathcal{T}_{n,\ell}^{(k)}$ if, for every $2\leq j\leq m$,
\[
 e_j\cap\bigcup_{i<j}e_i\subseteq e_{i_0}
 \quad\text{for some }i_0<j,
 \qquad
 \left|e_j\cap\bigcup_{i<j}e_i\right|\leq\ell.
\]
It is called a relaxed $\ell$-tree $\bar{\mathcal{T}}_{n,\ell}^{(k)}$ if only the second condition is required.
\end{definition}

The relaxed notion was introduced in the same paper. Dudek, La Fleur, Mubayi, and R\"odl~\cite{Dudek-Fleur-Mubayi-Rodl} record an open question pointed out by Fox (personal communication, 2014).

\begin{question}[Dudek, Fleur, Mubayi and R\"{o}dl~\cite{Dudek-Fleur-Mubayi-Rodl}; Fox]
Let $2 \le \ell < k$ be fixed integers. Is $\widehat{R}_{k}(\bar{\mathcal{T}}_{n,\ell}^{(k)})$ polynomial in $n$?
\end{question}

In this paper, we give a negative answer in the range $\ell+1\leq k\leq 2\ell-2$. Our main theorem is as follows.

\begin{theorem}\label{main-thm}
For every integer $\ell\ge3$ and every integer $k$ satisfying
\[
   \ell+1\le k\le 2\ell-2,
\]
there is a constant $c_{k,\ell}>0$ such that, for every sufficiently large
integer $n$, there is a $k$-uniform relaxed $\ell$-tree $\bar{\mathcal{T}}_{n,\ell}^{(k)}$ on exactly $n$
vertices with
\[
   \widehat{R}_k(\bar{\mathcal{T}}_{n,\ell}^{(k)})\ge 2^{c_{k,\ell}n^{1/\ell}}.
\]
\end{theorem}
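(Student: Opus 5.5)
We plan to build a relaxed $\ell$-tree that hides a ``complete $\ell$-uniform skeleton'' on about $m\approx n^{1/\ell}$ vertices. We then show that any host $k$-graph with fewer than $2^{cm}$ edges admits a $2$-colouring with no monochromatic copy of this skeleton. The plan is a construction followed by a probabilistic colouring argument; the last step is the one we are least sure of.

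\textbf{Construction.} Fix a core set $S$ of $m$ vertices and list the $\ell$-subsets $A_1,\dots,A_{\binom m\ell}$ of $S$. Start with one edge containing $A_1$. For each $j\ge2$, add an edge $e_j=A_j\cup B_j$, where $B_j$ consists of $k-\ell$ brand-new vertices. Then $e_j\cap\bigcup_{i<j}e_i=A_j$ has size exactly $\ell$, so the second condition of the Definition holds and the result is a relaxed $\ell$-tree. The first (strict) condition fails, since $A_j$ is generally not inside a single earlier edge; this is precisely where the relaxed notion is needed. The number of vertices is $m+(k-\ell)\binom m\ell\asymp m^\ell$, so $m\asymp n^{1/\ell}$. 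To hit exactly $n$ vertices, we pad with pendant edges that meet the existing tree in at most $\ell$ old vertices. We may also choose the $B_j$ so that they are themselves glued in a controlled way, for instance with $k\le 2\ell-2$ guaranteeing $|B_j|\le \ell-2$. This flexibility is what the counting below will exploit. Consequently, a monochromatic copy of $\bar{\mathcal T}$ in $\mathcal H$ yields an $m$-set $S'\subseteq V(\mathcal H)$ and distinct edges $e_A\supseteq A$ of a single colour, one for every $A\in\binom{S'}{\ell}$.

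\textbf{Colouring.} Let $\mathcal H$ have $E<2^{cm}$ edges, so $|V(\mathcal H)|\le kE$. We intend to colour $\mathcal H$ through an auxiliary colouring, not independently edge by edge. Take a uniformly random $2$-colouring $\varphi$ of $\binom{V(\mathcal H)}{\ell-1}$ (or of $\ell$-sets), together with a fixed linear order of $V(\mathcal H)$. Colour each edge $e$ by $\varphi$ evaluated at a canonical $(\ell-1)$- or $\ell$-subset of $e$, for example its first $\ell$ vertices. A monochromatic skeleton then forces $\varphi$ to be constant on a large family of canonical subsets associated with $S'$.

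\textbf{Counting.} The sets $A\cup(\text{first few vertices of }B_A)$ span the structure. The hypothesis $|B_A|=k-\ell\le\ell-2$ ensures that each canonical subset of $e_A$ retains at least two vertices of $S'$. Hence an $(\ell-1)$-uniform clique-like family of size $\Omega(m^{\ell-1})$ of distinct $\varphi$-arguments is determined by $S'$ alone, up to $(kE)^{O(1)}$ choices per argument. A union bound over the $(kE)^m$ choices of $S'$ then gives a failure probability of at most
\[
(kE)^{m}\,2^{-\Omega(m)}<1
\]
once $E<2^{cm}$. This yields $\widehat R_k\ge 2^{c n^{1/\ell}}$.

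\textbf{Main obstacle.} The hard step is the counting just described. Naively there are up to $E^{\binom m\ell}$ ways to choose the witnessing edges $e_A$, and this swamps $2^{-\binom m\ell}$. The canonical-subset colouring must be arranged so that the colour events depend only on sets determined by $S'$ plus boundedly many extra vertices. The same arrangement must also force $\Omega(m)$ genuinely independent $\varphi$-values. The range $k\le2\ell-2$ (and $\ell\ge3$) should be exactly what makes this possible. If the direct version fails, we would refine the construction: choose the new parts $B_j$ so that consecutive skeleton edges share prescribed $(\ell-1)$-sets, which pins the relevant canonical subsets down further.
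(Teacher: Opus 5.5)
Your construction is the paper's hedgehog: a core of size $m$, a private $(k-\ell)$-set for each $\ell$-subset, then pendant padding. It is a relaxed $\ell$-tree; note only that $e_j\cap\bigcup_{i<j}e_i$ is contained in $A_j$, not necessarily equal to it, which is harmless.

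The gap is the colouring step, and it has two parts.

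\textbf{The closing estimate is false.} For $E$ near $2^{cm}$, the union bound over cores costs $(kE)^m\approx 2^{cm^2}$. A per-core failure probability of $2^{-\Omega(m)}$ cannot beat that. If you instead use your $\Omega(m^{\ell-1})$ arguments, their $(kE)^{O(1)}$ choices each cost $(kE)^{O(m^{\ell-1})}$ in total, which is worse.

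\textbf{The colouring rule lets the witness edges escape the union bound.} Colouring $e$ by $\varphi$ of its first $\ell$ (or $\ell-1$) vertices puts vertices of $B_A$ into the argument. So the colour of the witness edge through $A$ is not determined by $S'$. A monochromatic copy only needs \emph{some} edge of the right colour through each $A$, so this is exactly the $E^{\binom m\ell}$ obstacle you flag, left unresolved.

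In fact the rule fails outright. Take the complete host on $M=m^{\ell+1}$ vertices, which has far fewer than $2^{cm}$ edges, and let $S'$ be the last $m$ vertices in your order. Every $A\subseteq S'$ lies in edges $A\cup B$ with $B$ entirely before $S'$. The canonical set of such an edge is $B$ together with the first few vertices of $A$. There are so many disjoint such $B$ that almost surely one can pick pairwise disjoint $B_A$ all giving red, and likewise all giving blue; that is, a monochromatic hedgehog exists. The fact that each canonical set keeps two vertices of $S'$ is not the property that matters.

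\textbf{The missing idea.} You need a colouring in which the colour of every edge through a suitable $A$ is forced by $A$ alone, whatever the remaining vertices are.
\begin{enumerate}
\item The paper colours the pairs of $[N]$ uniformly at random and declares a $k$-set red iff it contains a red $K_\ell$.
\item Two $\ell$-subsets of a $k$-set share $2\ell-k\ge 2$ vertices, hence a pair. So a $k$-set containing a blue $K_\ell$ contains no red one and is blue. This consistency is where $k\le 2\ell-2$ really enters.
\item A monochromatic hedgehog is then impossible once its core contains both a red and a blue $K_\ell$. This event depends on the core only.
\item A fixed $t$-set fails this with probability at most $2(1-2^{-\binom{\ell}{2}})^{p}=2^{-\Omega(t^2)}$, where $p=\Omega(t^2)$ is the size of a maximal family of edge-disjoint $K_\ell$'s in it.
\item This beats the $\binom Nt\le 2^{at^2}$ union bound for $N=\lfloor 2^{at}\rfloor$ with $a$ small.
\end{enumerate}
Passing from $K_N^{(k)}$ to arbitrary hosts is then just $R_k(G)\le k\widehat R_k(G)$ for isolate-free $G$. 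This is the same observation as your bound $|V(\mathcal H)|\le kE$.
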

Inspired by Kostochka and R{\"o}dl~\cite{Kostochka-Rodl}, Conlon, Fox, and R{\"o}dl~\cite{Conlon-Fox-Rodl}, and Dubroff, Gir{\~a}o, Hurley, and Yap~\cite{Dubroff-Girao-Hurley-Yap}, we adopt the following proof strategy:
our construction is the
generalized hedgehog $H_t$ studied by Dubroff, Gir{\~a}o, Hurley,
and Yap~\cite{Dubroff-Girao-Hurley-Yap} in the context of multicolour Ramsey numbers;
here we adapt it to the size-Ramsey setting. For a core set $W$ of size $t$,
we attach a private $(k-\ell)$-set $Z_A$ to every $A\in\binom{W}{\ell}$ and
take the edges $A\cup Z_A$. It is immediate from the definition that
$H_t$ is a relaxed $\ell$-tree.

The key ingredient is a new graph coloring lemma (Lemma~\ref{key-lemma}):
for $N=\lfloor 2^{a_\ell t}\rfloor$, the edges of $K_N$ can be red-blue colored so that
every $t$-vertex set contains both a red $K_\ell$ and a blue $K_\ell$. This
is proved by taking, inside each $t$-set, a maximal family of pairwise
edge-disjoint copies of $K_\ell$, and then applying a random coloring and
the union bound.

We then lift this coloring to the complete $k$-graph $K_N^{(k)}$: color a
$k$-set red if it contains a red graph $K_\ell$, and blue otherwise. The
restriction $k\le 2\ell-2$ ensures that any two $\ell$-subsets of a
$k$-set share at least two vertices, hence share a graph edge; consequently
a $k$-set cannot contain both a red and a blue graph $K_\ell$. If a
monochromatic copy of $H_t$ were present, its core $W$ would
contain both a red and a blue $K_\ell$, forcing the corresponding two edges
of the hedgehog to receive opposite colors, a contradiction. Thus
$R_k(H_t)>N$.

Finally, using the elementary inequality $R_k(G)\le k\widehat{R}_k(G)$ for
isolate-free $k$-graphs, we obtain the desired exponential lower bound for
$\widehat{R}_k(H_t)$. To cover every sufficiently large integer
$n$, we attach a suitable number of additional edges to a fixed
$\ell$-subset; this preserves the relaxed $\ell$-tree property and completes
the proof.

\section{Proof of Theorem \ref{main-thm}}
Before proving the main theorem, we first establish the following key lemma.
\begin{lemma}\label{key-lemma}
For every fixed $\ell\ge3$, there is a constant $a_\ell>0$ such that, for every
sufficiently large $t$, the edges of the complete graph $K_N$, with
$N=\lfloor2^{a_\ell t}\rfloor$, can be colored red and blue so that every
$t$-vertex set contains both a red $K_\ell$ and a blue $K_\ell$.
\end{lemma}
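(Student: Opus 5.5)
We color the edges of $K_N$ uniformly at random, each edge red or blue independently with probability $1/2$. We then show that with positive probability no $t$-vertex set $S\subseteq V(K_N)$ is missing a red $K_\ell$ or missing a blue $K_\ell$. The plan has three steps: find many edge-disjoint copies of $K_\ell$ inside every $t$-set, bound the failure probability for one $t$-set, and apply a union bound over all $\binom{N}{t}$ sets.

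\emph{Step 1 (packing).} Fix a $t$-set $S$ and let $\mathcal{F}_S$ be a maximal family of pairwise edge-disjoint copies of $K_\ell$ in $K_N[S]$. We claim $|\mathcal{F}_S|\ge c_\ell t^2$ for some $c_\ell>0$. Let $G_S$ be the graph of edges of $K_N[S]$ covered by $\mathcal{F}_S$; it has $\binom{\ell}{2}|\mathcal{F}_S|$ edges. By maximality, every $\ell$-subset of $S$ spans at least one edge of $G_S$. Hence the complement of $G_S$ in $K_N[S]$ is $K_\ell$-free. By Turán's theorem the complement has at most $(1-\frac{1}{\ell-1})\frac{t^2}{2}$ edges. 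Therefore
\[
|E(G_S)|\ge \binom{t}{2}-\Bigl(1-\tfrac{1}{\ell-1}\Bigr)\tfrac{t^2}{2}\ge \tfrac{t^2}{2(\ell-1)}-\tfrac{t}{2},
\]
which gives $|\mathcal{F}_S|\ge c_\ell t^2$ for large $t$. A simpler alternative is to greedily pack disjoint $\ell$-sets within blocks and use designs. The Turán/maximality argument is the cleanest, so we use it.

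\emph{Step 2 (probability for one set).} Because the copies in $\mathcal{F}_S$ are edge-disjoint, the events ``this copy is monochromatic red'' are independent across copies, and each has probability $p=2^{-\binom{\ell}{2}}$. So
\[
\Pr[S \text{ has no red } K_\ell]\le (1-p)^{|\mathcal{F}_S|}\le e^{-p c_\ell t^2},
\]
and the same bound holds for blue. Thus $S$ is bad with probability at most $2e^{-pc_\ell t^2}$.

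\emph{Step 3 (union bound).} The expected number of bad $t$-sets is at most
\[
\binom{N}{t}\,2e^{-pc_\ell t^2}\le 2\exp\bigl(t\ln N - pc_\ell t^2\bigr).
\]
Take $N=\lfloor 2^{a_\ell t}\rfloor$, so $t\ln N\le a_\ell t^2\ln 2$. Choosing $a_\ell< pc_\ell/(2\ln 2)$ makes the expectation at most $2e^{-pc_\ell t^2/2}<1$ for large $t$. Hence some coloring has no bad $t$-set, which proves the lemma.

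The only step needing care is Step 1, the quadratic lower bound on the packing. The deduction from maximality that the complement is $K_\ell$-free uses $\ell\ge3$ only to make the Turán density $1-\frac{1}{\ell-1}$ strictly less than $1$ uniformly. For $\ell=2$ the density would be $0$ and the argument changes trivially. Everything else is routine.
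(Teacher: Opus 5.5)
Your proof is correct and follows essentially the paper's route: a uniform random coloring, a fixed maximal family of pairwise edge-disjoint copies of $K_\ell$ in each $t$-set (so the events ``this copy is red'' are independent), and a union bound over the at most $N^t$ sets. The only difference is how you get $|\mathcal F_S|=\Omega_\ell(t^2)$: you apply Tur\'an's theorem to the $K_\ell$-free graph of uncovered edges, while the paper double counts (every $\ell$-set contains a covered edge, and each edge lies in $\binom{t-2}{\ell-2}$ copies) to get $\binom{t}{\ell}\le |\mathcal F_S|\binom{\ell}{2}\binom{t-2}{\ell-2}$. Your version gives a slightly better constant, and, contrary to your closing remark, neither argument actually uses $\ell\ge3$.
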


\begin{proof}
Let
\[
 b=\binom{\ell}{2},
 \qquad c_\ell=-\log_2(1-2^{-b})>0.
\]
In any $t$-vertex complete graph, take a maximal family of pairwise
edge-disjoint copies of $K_\ell$, of size $p$. Clearly, every copy of $K_\ell$ not
selected must share a graph edge with a selected copy of $K_\ell$.  A selected copy has $b=\binom{\ell}{2}$ edges, and each
graph edge lies in exactly $\binom{t-2}{\ell-2}$ copies of $K_\ell$. Hence
\[
 \binom{t}{\ell}
 \le p\binom{\ell}{2}\binom{t-2}{\ell-2},
 \qquad
 p\ge \frac{2t(t-1)}{\ell^2(\ell-1)^2}.
\]
For $t\ge\ell$, we have
\[
 p\ge \beta_\ell t^2,
 \qquad
 \beta_\ell:=\frac1{\ell^2(\ell-1)^2}.
\]

Now we color the edges of $K_N$ independently and uniformly by red or blue.  For a
fixed $t$-set $S$, the selected edge-disjoint $K_\ell$'s are all red with
independent events, each of probability $2^{-b}$.  Thus
\[
  \Pr(S\text{ contains no red }K_\ell)
 \le (1-2^{-b})^p
 \le 2^{-c_\ell\beta_\ell t^2}.
\]
The same holds for the absence of a blue $K_\ell$.  Choose
$a_\ell:=c_\ell\beta_\ell/4$.  By the union bound, the probability that
some $t$-set contains no red $K_\ell$ or no blue $K_\ell$ is at most
\[
 2\binom Nt2^{-c_\ell\beta_\ell t^2}
 \le 2N^t2^{-c_\ell\beta_\ell t^2}
 \le 2^{1+(a_\ell-c_\ell\beta_\ell)t^2}<1
\]
for all sufficiently large $t$.  This completes the proof.
\end{proof}

\begin{proof}[Proof of Theorem \ref{main-thm}]
Fix $\ell+1\le k\le2\ell-2$ and let $d=k-\ell$.  Let $W$ be a set of
size $t$.  For each $A\in\binom{W}{\ell}$, we choose a private set $Z_A$ of
size $d$, where all the $Z_A$ are pairwise disjoint and disjoint from $W$.  Define
the $k$-uniform hypergraph
\[
 V(H_t)=W\cup\bigcup_{A\in\binom W\ell}Z_A,
 \qquad
 E(H_t)=\{A\cup Z_A:A\in\binom W\ell\}.
\]
Then
\[
 q_t:=|V(H_t)|=t+d\binom t\ell.
\]

In any ordering of these edges, the private set $Z_A$ of the current edge has
not occurred in any earlier edge. Consequently, we have
\[
 (A\cup Z_A)\cap\bigcup_{i<j}e_i\subseteq A,
 \qquad
 \left|(A\cup Z_A)\cap\bigcup_{i<j}e_i\right|\leq\ell.
\]
Thus, $H_t$ is a relaxed $\ell$-tree, which has no isolated vertices.

Take $t$ sufficiently large so that Lemma~\ref{key-lemma} applies and $q_t\leq N$. Use the graph coloring from Lemma \ref{key-lemma} to color $K_N^{(k)}$ as follows: a
$k$-set $Q$ is red if it contains a red graph $K_\ell$, and blue otherwise.
If two $\ell$-subsets $A,B$ lie in the same $k$-set, then
\[
 |A\cap B|\ge 2\ell-k\ge2,
\]
which means that they share a graph edge. Therefore, a $k$-set cannot contain both a red
and a blue graph $K_\ell$, since their common graph edge cannot have both
colors.  It follows that
\[
 Q\text{ contains a red }K_\ell\Longrightarrow Q\text{ is red},
 \qquad
 Q\text{ contains a blue }K_\ell\Longrightarrow Q\text{ is blue}.
\]

Suppose, for a contradiction, that there is a monochromatic copy of $H_t$ in the colored $K_N^{(k)}$. Let $\varphi$ be an embedding,
and let $S=\varphi(W)$.  By Lemma \ref{key-lemma}, we have that $S$ contains a red $K_\ell$, denoted by $B_R$, and a blue
$K_\ell$, denoted by $B_B$.  Let
$A_R=\varphi^{-1}(B_R)$ and $A_B=\varphi^{-1}(B_B)$.  The image of
$A_R\cup Z_{A_R}$ is red, while the image of
$A_B\cup Z_{A_B}$ is blue, a contradiction.  Therefore
\[
 R_k(H_t)>N=\lfloor2^{a_\ell t}\rfloor.
\]

Now we claim that for every isolate-free $k$-uniform hypergraph $G$,
\begin{equation}\label{eq1}
 R_k(G)\le k\widehat{R}_k(G).                     
\end{equation}
To see this, let a $k$-graph $F$ satisfy $F\longrightarrow_k G$ and write $m:=|E(F)|$. Let $F'$ be obtained by deleting the isolated vertices of $F$. Then $F'$ has at most $km$ vertices, and every coloring of $F'$ still contains a monochromatic copy of $G$ since $G$ has no isolated vertices. Since $F'$ is a subhypergraph of $K_{km}^{(k)}$, every coloring of $K_{km}^{(k)}$ restricts to a coloring of $F'$. Thus $K_{km}^{(k)}\longrightarrow_k G$, so
$R_k(G)\leq km$. Minimizing over
$F$ proves (\ref{eq1}).

Applying (\ref{eq1}) to $H_t$ we have
\[
 \widehat{R}_k(H_t)
 \ge \frac1kR_k(H_t)
 >\frac1k\lfloor2^{a_\ell t}\rfloor
 \ge 2^{a_\ell t-\lceil\log_2 k\rceil-1}
\]
for all sufficiently large $t$. Since
$q_t=t+d\binom{t}{\ell}=\Theta_{k,\ell}(t^\ell)$, 
we have $t\ge C_{k,\ell}^{-1/\ell}q_t^{1/\ell}$ for some constant
$C_{k,\ell}$, and for some
$c_{k,\ell}>0$,
\[
 \widehat{R}_k(H_t)\ge 2^{c_{k,\ell}q_t^{1/\ell}}.
\]
In the following, we adjust the order to every sufficiently
large integer $n$. Let
\[
   d:=k-\ell,\qquad D:=d+1,\qquad q_s:=s+d\binom{s}{\ell},\qquad  u:=\left\lfloor\left(\frac{n}{2D}\right)^{1/\ell}\right\rfloor.
\]
Among the consecutive integers $u-d+1,\ldots,u$ there is exactly one
$t_0$ with $t_0\equiv n\pmod d$.  For all sufficiently large $n$ we have
$t_0\geq\ell$.  Using $q_s\leq Ds^\ell$ for every $s\geq1$, which follows
directly from the definition of $q_s$, we have
\[
   q_{t_0}\leq D t_0^\ell\leq D u^\ell\leq n/2.
\]
Thus
\[
   \mathcal S_n=\{s\geq\ell:s\equiv n\pmod d,\ q_s\leq n\}\neq \emptyset.
\]
Since $q_s\to\infty$, the set $\mathcal S_n$ is finite. Choose the largest element $t$ in  $\mathcal S_n$.  Since $q_s$ is strictly
increasing and $t+d$ is the next integer in the same
residue class we have $q_{t+d}>n$.  Since $q_t\equiv t\pmod d$, the number
\[
   r:=\frac{n-q_t}{d}
\]
is a nonnegative integer.  Fix $A_0\in\binom{W}{\ell}$, take $r$ fresh
$d$-sets $X_1,\ldots,X_r$ pairwise disjoint from one another and from
$V(H_t)$, and add the
edges $A_0\cup X_1,\ldots,A_0\cup X_r$ after all edges of $H_t$.  Each new
edge meets the preceding vertex union exactly in $A_0$, so the resulting
hypergraph $\mathcal T_n$ is a relaxed $\ell$-tree on exactly $n$ vertices and it contains
$H_t$. By monotonicity of size-Ramsey numbers under taking subhypergraphs,
\[
   \widehat{R}_k(\mathcal T_n)\geq\widehat{R}_k(H_t).
\]

Since $t\geq t_0$ and $t_0\to\infty$ as $n\to\infty$, this $t$ is
sufficiently large for Lemma~\ref{key-lemma}, for $q_t\leq N$, and for all
the preceding estimates whenever $n$ is sufficiently large.

Therefore
\[
   n<q_{t+d}\leq D(t+d)^\ell,
\]
Thus,
\[
   t>\left(\frac nD\right)^{1/\ell}-d.
\]
For all sufficiently large $n$, this implies
\[
   t\geq \frac{1}{2D^{1/\ell}}n^{1/\ell}.
\]
On the other hand, by (\ref{eq1}) and by $\lfloor2^{a_\ell t}\rfloor\geq2^{a_\ell t-1}$ we have
\[
   \widehat{R}_k(H_t)>2^{a_\ell t-1-\log_2 k}.
\]
For sufficiently large $t$, $1+\log_2 k\leq a_\ell t/2$, and hence
\[
   \widehat{R}_k(H_t)\geq2^{a_\ell t/2}
   \geq2^{\frac{a_\ell}{4D^{1/\ell}}n^{1/\ell}}.
\]
Together with $\widehat{R}_k(\mathcal T_n)\geq\widehat{R}_k(H_t)$, this yields
the result upon taking
\[
   c_{k,\ell}=\frac{a_\ell}{4(d+1)^{1/\ell}}>0,
\]
\[
   \widehat{R}_k(\mathcal T_n)\geq 2^{c_{k,\ell}n^{1/\ell}}.
\]
This completes the proof.
\end{proof}
\section{Concluding remarks}

In our proof, the method
lifts a graph coloring of \(K_N\), in which every \(t\)-set contains
both a red and a blue \(K_\ell\), to a coloring of \(K_N^{(k)}\). To
prevent a \(k\)-set from containing both colors, we used that any two
\(\ell\)-subsets of a \(k\)-set intersect in at least \(2\ell-k\)
vertices. This requires \(2\ell-k\ge 2\), i.e.\ \(k\le 2\ell-2\).
At \(k=2\ell-1\), two \(\ell\)-subsets of a \(k\)-set may intersect in
only one vertex, and hence may be edge-disjoint as graph \(K_\ell\)'s.
The color-conflict mechanism disappears, suggesting that the
non-polynomial behavior is confined to \(k\le 2\ell-2\).

\begin{conjecture}\label{conj:large-k}
For fixed \(\ell\ge 3\) and \(k\ge 2\ell-1\), every \(k\)-uniform
relaxed \(\ell\)-tree has polynomial size-Ramsey number; that is,
\[
\widehat{R}_k(\bar{\mathcal T}_{n,\ell}^{(k)})=O(n^{C_{k,\ell}})
\]
for some \(C_{k,\ell}>0\).
\end{conjecture}

Our intuition behind this conjecture is threefold. First,
Dudek, La Fleur, Mubayi, and R\"odl~\cite{Dudek-Fleur-Mubayi-Rodl}
proved that strict \(\ell\)-trees have size-Ramsey number
\(O(n^{\ell+1})\). Relaxed \(\ell\)-trees differ from strict ones
precisely in allowing a new edge to intersect the union of earlier edges
in more than one previous edge. When \(k\) is large relative to
\(\ell\), the private part \(k-\ell\) of each edge is large, making the
hypergraph more flexible and potentially easier to embed into a dense
host hypergraph. Second, as \(k\) increases, the condition that two
\(\ell\)-subsets fit into a common \(k\)-set becomes weaker, reducing
the kind of pairwise interactions that our lower-bound construction
exploits. Third, the failure of our coloring method at \(k=2\ell-1\)
suggests that the technique we used is no longer available,
and one may hope for a general polynomial upper bound in this regime.

It would be very interesting to determine whether the threshold
\(k=2\ell-1\) is indeed the exact boundary between polynomial and
non-polynomial behavior for relaxed \(\ell\)-trees.


\begin{thebibliography}{99}

\bibitem{Bal-DeBiasio-Lo}
D.~Bal, L.~DeBiasio, and A.~Lo, A lower bound on the multicolor size-Ramsey numbers of paths in hypergraphs, \emph{European Journal of Combinatorics} \textbf{120}~(2024), 103969. doi.org/10.1016/j.ejc.2024.103969.

\bibitem{Clemens-Jenssen-Kohayakawa-Morrison-Mota-Reding-Roberts}
D.~Clemens, M.~Jenssen, Y.~Kohayakawa, N.~Morrison, G.~O.~Mota, D.~Reding, and B.~Roberts, The size-Ramsey number of powers of paths, \emph{Journal of Graph Theory} \textbf{91}~(3)~(2019), 290-299.

\bibitem{Conlon-Fox-Rodl}
D.~Conlon, J.~Fox, and V.~R{\"o}dl,
Hedgehogs are not colour blind,
\emph{Journal of Combinatorics} \textbf{8}~(3)~(2017), 475-485.

\bibitem{Dubroff-Girao-Hurley-Yap}
Q.~Dubroff, A.~Gir{\~a}o, E.~Hurley, and C.~Yap,
Tower gaps in multicolour Ramsey numbers,
\emph{Forum of Mathematics, Sigma} \textbf{11}~(2023), e84. doi:10.1017/fms.2023.89.

\bibitem{Dudek-Fleur-Mubayi-Rodl}
A.~Dudek, S.~La Fleur, D.~Mubayi, and V.~R\"odl, On the size-Ramsey number of hypergraphs, \emph{Journal of Graph Theory} \textbf{86}~(2017), 104-121.

\bibitem{Erdos-Faudree-Rousseau-Schelp}
P. Erd\H{o}s, R. Faudree, C. Rousseau, and R. Schelp, The size Ramsey number, \emph{Periodica Mathematica Hungarica} \textbf{9}~(1978), 145-161.

\bibitem{Han-Kohayakawa-Letzter-Mota-Parczyk}
J.~Han, Y.~Kohayakawa, S.~Letzter, G.~O.~Mota, and O.~Parczyk, The size-Ramsey number of 3-uniform tight paths,
\emph{Advances in Combinatorics} \textbf{5}~(2021), 12 pp.

\bibitem{Kostochka-Rodl}
A.~V. Kostochka and V.~R{\"o}dl,
On Ramsey numbers of uniform hypergraphs with given maximum degree,
\emph{Journal of Combinatorial Theory, Series A} \textbf{113}~7~(2006),
1555-1564.

\bibitem{Letzter-Pokrovskiy-Yepremyan}
S.~Letzter, A.~Pokrovskiy, and L.~Yepremyan, Size-Ramsey numbers of tight paths,
arXiv:2507.01498 [math.CO] (2025).

\bibitem{Lu-Wang}
L.~Lu and Z.~Wang, On the size-Ramsey number of tight paths, \emph{SIAM Journal on Discrete Mathematics} \textbf{32}~(3)~(2018), 2172-2179.
\end{thebibliography}
\end{document}